\newtheorem{theorem}{Theorem}[section]
\newtheorem{corollary}[theorem]{Corollary}
\newtheorem{definition}{Definition}
\newtheorem{example}{Example}
\newtheorem{lemma}[theorem]{Lemma}
\theoremstyle{remark}
\newtheorem{remark}[theorem]{Remark}
\numberwithin{equation}{section}
\newcommand{\Pro}{\mathbb{P}}
\newcommand{\Q}{\mathbb{Q}}
\newcommand{\Z}{\mathbb{Z}}
\newcommand{\Gal}{\mathrm{Gal}}
\newcommand{\hhat}{\hat{h}}
\newcommand{\kbar}{\overline{k}}
\newcommand{\kleqnu}{\overline{k}^{\leq \nu}}
\newcommand\rquot[2]{
  \mathchoice
  {
    \text{\raise0.5ex\hbox{$#1$}\big/\lower0.5ex\hbox{$#2$}}%
  }
  {
    #1\,/\,#2
  }
  {
    #1\,/\,#2
  }
  {
    #1\,/\,#2
  }
}
\newcommand\lrquot[3]{
  \mathchoice
  {
    \text{\lower0.5ex\hbox{$#1$}\big\backslash\raise0.5ex\hbox{$#2$\!}\big/
      \lower0.5ex\hbox{\!\!$#3$}}%
  }
  {
    #1\,\backslash\,#2\,/\,#3
  }
  {
    #1\,\backslash\,#2\,/\,#3
  }
  {
    #1\,\backslash\,#2\,/\,#3
  }
}
\newcommand\lquot[2]{
  \mathchoice
  {
    \text{\lower0.5ex\hbox{$#1$}\big\backslash\raise0.5ex\hbox{$#2$}}%
  }
  {
    #1\,\backslash\,#2
  }
  {
    #1\,\backslash\,#2
  }
  {
    #1\,\backslash\,#2
  }
}
  \DeclareFontFamily{U}{wncy}{}
    \DeclareFontShape{U}{wncy}{m}{n}{<->wncyr10}{}
    \DeclareSymbolFont{mcy}{U}{wncy}{m}{n}
    \DeclareMathSymbol{\Sha}{\mathord}{mcy}{"58}
\begin{document}
\title{Counting algebraic points of bounded degree on curves}
\author{Matias Alvarado}
\address{ Departamento de Matem\'aticas,
Pontificia Universidad Cat\'olica de Chile.
Facultad de Matem\'aticas,
4860 Av.\ Vicu\~na Mackenna,
Macul, RM, Chile}
\email[M. Alvarado]{matias.alvarado.torres1@gmail.com }
\date{\today}
\maketitle
\begin{abstract}

Let $X$ be a smooth projective curve over a number field $k$. Let $f\colon X \to \Pro^1$ be a nonconstant morphism over $k$ that realizes the gonality of $X$.
In this article, we study the growth rate of $\left\{P\in X\left(\overline{k} \right)\left| [k(x):k]=\nu , k(x)=k(f(x)), h(x)\leq T \right.\right\}$ for a fixed $\nu$.
\end{abstract}

\setcounter{tocdepth}{1}


\section{Introduction}

Let $k$ be a number field, and $X$ be a smooth projective curve over $k$. Faltings theorem establishes that the set of rational points $X(L)$ is finite for any finite extension $L/k$ if $g(X)>1$. In this article, we study algebraic points that belong to finite extensions of a fixed degree.
For example, if $X$ is a hyperelliptic curve given by $y^2=f(x)$ (of genus greater than 1) defined over a number field $k$, then we can ask about quadratic points. We observe that there are infinitely many points of degree 2. Namely, any point of the form $(x,\sqrt{f(x)})\in X(\overline{k})$ (for $x\in k$) is a point of degree 1 or 2 over $k$, but by Faltings theorem, there are only finitely many $k$-rational points. Then there are infinitely many quadratic points. In previous example we observe that these quadratic points are preimages of $k$-rational of $\Pro^1$ via the natural $2:1$ covering  $X\to \Pro^1$ given by $(x,y)\mapsto x$. 

If $\nu$ is a natural number, and $f\colon X \to \Pro^1$ is a dominant morphism, then we will focus on counting points $x\in X(\overline{k})$ such that $[k(x):k]=\nu$ and $k\left(f(x)\right)=k(x)$ ordered by height.
Some properties of these points have been studied by Vojta in \cite{vojta1989arithmetic}. In particular Vojta proved that under the presence of a dominant morphism $f \colon X\to \Pro^1$, there is numerical condition (which involves the genus of $X$, $\nu$ and $\deg f$) that ensures the finiteness of the set $\{x\in X\left(\overline{k}\right):[k(x):k]\leq \nu \text{ and } k(f(x))=k(x)\}$. 
In \cite{Songtucker01}, Song and Tucker study same problem but with morphisms to general curves $C'$ instead of $\Pro^1.$

Before stating our main result, we introduce the notation $X\left(\overline{k}^{\leq \nu}\right)$ to denote the set of algebraic point $x\in X(\overline{k})$ such that $[k(x):k]\leq \nu$. Similarly, we denote by $X( \overline{k}^{=\nu})$ the set of points $x\in X\left(\overline{k}\right)$ such that $[k(x):k]=\nu.$

The main result of this article is the following.

\begin{theorem}\label{mainthm} Let $X$ be a smooth projective over $k$ of odd gonality $\gamma$. Let $f \colon X \to \Pro^1$ be a dominant morphism of degree $\gamma$. Let $\nu$ be a prime number such that $\nu \leq \gamma(X)$.  Then, there is an integer $\rho\geq 0$, such that
    $$\#\left\{P \in X\left(k^{=\nu}\right):h(P)\leq T \ \& \ k(f(x))=k(x) \right\}\asymp T^{\rho/2}.$$
\end{theorem}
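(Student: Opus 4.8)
The plan is to reduce the counting problem to a question about rational points on the symmetric power $X^{(\nu)}$ and its image under the Abel--Jacobi map. Observe that a point $x \in X(\overline{k}^{=\nu})$ with $k(f(x)) = k(x)$ is determined by its Galois orbit, which is an effective $k$-rational divisor $D$ of degree $\nu$ on $X$; the condition $k(f(x)) = k(x)$ forces the fibre $f^{-1}(f(x))$ to interact with $D$ in a controlled way. Since $\deg f = \gamma$ is the gonality and $\nu \leq \gamma$ is prime, I would first analyze how such a degree-$\nu$ orbit can sit inside a fibre of $f$: either $D$ is (a piece of) a single fibre, or the points of $D$ lie in distinct fibres. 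The oddness of $\gamma$ and primality of $\nu$ are exactly what is needed to rule out the sporadic intermediate configurations (a sub-orbit of a fibre would give a map of degree $< \gamma$ to $\Pro^1$ after a base change, contradicting minimality of the gonality — here is where I would invoke a Castelnuovo--Severi type inequality or the behaviour of gonality under the relevant pullbacks).

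Next I would set up the geometry: let $W_\nu \subseteq \Jac(X)$ be the image of $X^{(\nu)}$ under a choice of Abel--Jacobi map $\iota$ (after possibly extending $k$ to have a rational point, which does not affect the asymptotic up to the implied constants). By Faltings' theorem on subvarieties of abelian varieties (the Mordell--Lang / Faltings--Vojta theorem), the $k$-rational points of $W_\nu$ lie in a finite union of translated abelian subvarieties contained in $W_\nu$. The relevant positive-dimensional pieces come precisely from the linear system defining $f$: the curve $f^* \Pro^1 \hookrightarrow X^{(\gamma)}$ gives a $\Pro^1$ inside $W_\gamma$, and intersecting/pulling back gives rational curves (or $\Pro^r$'s) inside $W_\nu$. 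The integer $\rho$ in the statement will be the maximal dimension of a projective space $\Pro^\rho \subseteq W_\nu$ arising this way (equivalently, the dimension of the family of degree-$\nu$ effective divisors moving in a fibre-type linear system). Points of $X(\overline{k}^{=\nu})$ with the gonal condition then correspond, up to finitely many exceptions, to $k$-points of such a $\Pro^\rho$ mapping to $X^{(\nu)}$.

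With this structure in place, the counting becomes an instance of counting rational points of bounded height on $\Pro^\rho$ via a pulled-back height. I would argue that the height $h(P)$ on $X$, restricted to the points in one of these families, is comparable (by functoriality of heights and the Néron--Tate quadratic structure on $\Jac(X)$) to a height on the ambient $\Pro^\rho$ of the form $\tfrac{1}{2}h_{\Pro^\rho} + O(1)$ — the factor $\tfrac12$ coming from the square in the canonical height and the fact that the divisor class realizing $h$ on $X$ pulls back to (roughly) twice a hyperplane, or alternatively from the degree-$2$ nature of the relevant parametrization when $\gamma$ is odd. Then Schanuel's theorem on $\Pro^\rho(k)$ gives $\#\{Q \in \Pro^\rho(k) : h_{\Pro^\rho}(Q) \leq T\} \asymp T^{\rho(\dim+1)/\cdots}$ — more precisely one needs the clean form $\asymp T^{\rho}$ for the standard height on $\Pro^\rho$ over a number field times appropriate normalization — which after the $\tfrac12$-rescaling yields $\asymp T^{\rho/2}$, and summing the finitely many families (taking the max $\rho$) gives the stated asymptotic.

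The main obstacle I anticipate is twofold. First, pinning down the exact comparison between $h$ on $X(\overline{k}^{=\nu})$ and the projective height on the parametrizing $\Pro^\rho$, including getting the constant $\tfrac12$ exactly right and controlling the (infinitely many!) height contributions so that no subfamily overtakes the main term — this requires a careful height machine computation and an argument that the "diagonal" or degenerate loci in $X^{(\nu)}$ contribute only $O(T^{(\rho-1)/2})$ or fewer points. Second, proving that $\rho$ is well-defined and that \emph{all} positive-dimensional families of gonal-type degree-$\nu$ divisors are projective spaces (not, say, higher-genus curves or abelian varieties contributing a different growth exponent) — this is where the hypotheses "$\gamma$ odd" and "$\nu$ prime, $\nu \leq \gamma$" must be used decisively, presumably via a classification of the sub-linear-systems of $|f^*\Ocal_{\Pro^1}(1)|$ and an application of Faltings together with the fact that a base-point-free pencil computing the gonality is unique up to the constraints imposed by Castelnuovo--Severi.
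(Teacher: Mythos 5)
Your setup (pass to the Galois orbit in $X^{(\nu)}$, map to $W_\nu\subseteq J_X$, invoke the Faltings/Mordell--Lang structure theorem for $W_\nu(k)$) matches the paper's, but the counting mechanism you propose is wrong, and the error is located exactly where the exponent $T^{\rho/2}$ has to come from. You assert that the positive-dimensional pieces of the Faltings decomposition of $W_\nu(k)$ are projective spaces $\Pro^\rho\subseteq W_\nu$ and that one should count their $k$-points by Schanuel. An abelian variety contains no rational curves, so no positive-dimensional $\Pro^\rho$ sits inside $W_\nu\subseteq J_X$: the components of the decomposition are translates $x_i+A_i$ of abelian subvarieties. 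The linear systems you have in mind (the $\Pro^r$'s of fibre-type divisors, e.g.\ $f^*t$) live in $X^{(\nu)}$ and are precisely the fibres contracted by the Abel--Jacobi map; moreover they parametrize the \emph{non}-$f$-rigid points (a divisor of the form $f^*t$ with $t\in\Pro^1(k)$ has $k(f(x))=k\neq k(x)$), which the theorem excludes. Even granting your picture, Schanuel's theorem counts points of bounded \emph{multiplicative} height; with the logarithmic height bounded by $T$ it gives $\asymp e^{cT}$ points on $\Pro^\rho$, not $T^{\rho/2}$. The polynomial growth $T^{\rho/2}$ is the signature of N\'eron's theorem: points of canonical height $\leq T$ in a finitely generated group of Mordell--Weil rank $\rho$ are lattice points in a ball of radius $\sqrt{T}$ for a positive definite quadratic form, hence $\asymp T^{\rho/2}$. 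So the $\rho$ in the statement is a rank, not a dimension, and your proposal never produces it.

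The paper's actual route is: (i) use Vojta's arithmetic discriminant inequalities (in the form of Song--Tucker's identities) to show that for $f$-rigid points $p$ of degree $\nu$ one has $h(p)\asymp \hhat_{J_X}(D_p)$, with implied constants depending only on $g,\delta,\nu$ (this is Lemmas \ref{ineq1} and \ref{ineq2}; your ``$\tfrac12 h_{\Pro^\rho}$'' comparison has no analogue); (ii) apply Mordell--Lang to write $W_\nu(k)=\bigcup_i(x_i+A_i(k))$ and N\'eron's theorem to each $A_i(k)$, yielding $\#\{x\in W_\nu(k):\hhat(x)\leq T\}\asymp T^{\rho/2}$ with $\rho=\max_i\rk A_i(k)$; (iii) check that the $f$-rigid degree-$\nu$ points account for all but a negligible part of $W_\nu(k)$, split into the ranges $\nu<\gamma/2$ (finiteness, via Frey), $\gamma/2<\nu<\gamma$, and $\nu=\gamma$ (where the non-rigid fibre divisors collapse to a single point of $J_X$). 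Your first paragraph's Castelnuovo--Severi discussion gestures at step (iii), but without steps (i) and (ii) the proof does not get off the ground.
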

The strategy we use to prove Theorem \ref{mainthm} is studying the symmetric power of $X$ and its image in the jacobian of $X.$ which allows us to relate algebraic points of bounded degree on curves to rational points in the corresponding jacobian variety. Furthermore, we establish a relation between the heights of algebraic points on $X$ and N\'eron-Tate height of their images in the jacobian.

\subsection{Overview of the article}In section \ref{preliminaries}, we introduce some results due to Vojta on algebraic points of bounded degree. In addition, we present the strategy to study algebraic points. At the end of this section, we relate the height of algebraic points to the N\'eron-Tate height of points on the jacobian of the curve $X.$

In Section\ref{proofofmainthm}, we give the proof of \ref{mainthm}.
In Section \ref{examples} we exhibit some examples of Theorem \ref{mainthm}.

\section{Preliminaries}\label{preliminaries} 

We begin this section by introducing the notion of $f$-rigid points.
\begin{definition}
    Let $f \colon X \to \Pro^1$ be a nonconstant morphism over $k$. We say that $x\in X\left( \overline{k}\right)$ is a $f$-rigid point if $k(x)=k(f(x))$.
\end{definition}
\begin{remark}
    $f$-rigid points are exactly the points involved in Theorem \ref{mainthm}.
\end{remark}
\begin{remark}
If $X$ is the hyperelliptic curve given by $y^2=g(x)$, and $f \colon X \to \Pro^1$ the natural 2:1 covering, then the points $(x,\sqrt{f(x)})\in X(\overline{k})$ are no $f$-rigid in general (except for the values $x$ for which $f(x)$ is a square.)
\end{remark}
As we commented in the introduction, Vojta in \cite{vojta1989arithmetic} gives a numerical condition to ensure the finiteness of $f$-rigid algebraic points of bounded degree. The Vojta's result is the following.

\begin{theorem}
Let $X$ be a nice curve over $k$ of genus $g$. Let $f \colon X \to \Pro^1$ be a dominant morphism, and $\nu\geq 1$ an integer number. Suppose 
    $$g-1>(\deg f)(\nu-1).$$
    Then the set
    $$\{P\in X(\kbar):[k(P):k]\leq \nu, \text{ and }  k(f(P))=k(P)\}$$
    is finite.
\end{theorem}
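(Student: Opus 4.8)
The plan is to convert the statement into a finiteness statement for rational points on a subvariety of a torsor under the Jacobian, where Faltings' theorem on subvarieties of abelian varieties applies; the numerical hypothesis is exactly what guarantees the geometric condition under which Faltings yields finiteness. First I would reduce to a fixed degree: it is enough to show that for each integer $n$ with $1\le n\le\nu$ there are only finitely many $f$-rigid points of degree exactly $n$. Fix such a $P$, let $P_1,\dots,P_n$ be its $\Gal(\kbar/k)$-conjugates and put $t_i:=f(P_i)\in\Pro^1$. Rigidity $k(f(P))=k(P)$ is equivalent to $f$ being injective on $\{P_1,\dots,P_n\}$, so the $t_i$ are pairwise distinct; hence $D:=P_1+\cdots+P_n$ and $E:=t_1+\cdots+t_n$ are $k$-rational effective divisors, $f_*D=E$, and since the fibres $f^*t_i$ have disjoint supports we get $D\le f^*E$. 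Set $D':=f^*E-D$, a $k$-rational effective divisor of degree $(\deg f-1)n$. Since $\Pic^0(\Pro^1)=0$, every effective divisor of degree $n$ on $\Pro^1$ is linearly equivalent over $k$ to $n$ times a fixed $k$-point $\infty$, so $[f^*E]=n[F]$ with $F:=f^*\infty$ a fixed class in $\Pic^{(\deg f)n}(X)$; thus I obtain the identity $[D]+[D']=n[F]$ in which $[F]$ does not depend on $P$.

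Next I would pass to the Jacobian. Write $W_m\subseteq\Pic^m(X)$ for the image of the symmetric power $X^{(m)}$; it is a $k$-rational subvariety of dimension $\min(m,g)$, and $\Pic^m(X)$ is a torsor under $J:=\Jac(X)$. By the previous paragraph $[D]$ is a $k$-point of
$$V_n:=W_n\cap\bigl(n[F]-W_{(\deg f-1)n}\bigr)\subseteq\Pic^n(X);$$
and because $f_*D'=(\deg f-1)E$ recovers $E$, hence $D=f^*E-D'$, from $D'$ alone, the assignment $D\mapsto D'$ is injective on the $f$-rigid locus, so $[D]$ in fact lies in a $k$-rational subvariety of dimension at most $n<g$, which I still denote $V_n$. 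Now I would invoke Faltings' theorem on rational points of subvarieties of abelian varieties, applied to the torsor $\Pic^n(X)$: $V_n(k)$ is contained in a finite union of translates of abelian subvarieties of $J$ lying inside $V_n$. Since any such translate lies inside $W_n$, it suffices to rule out that $W_n$ contains a translate of a positive-dimensional abelian subvariety, and this is precisely where $g-1>(\deg f)(\nu-1)$ enters: a translated abelian subvariety $B+x\subseteq W_n$ with $\dim B\ge 1$ produces, by the standard structure theory for abelian subvarieties of $W_m$, a nonconstant morphism $X\to C$ of some degree $e\ge 2$ onto a curve of positive genus with $e$ controlled by $n$, and feeding this together with the $g^1_{\gamma}$ underlying $f$ (so $\gamma\le\deg f$) into the Castelnuovo–Severi inequality yields $g-1\le(\deg f)(n-1)\le(\deg f)(\nu-1)$, a contradiction. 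Hence $V_n(k)$ is finite, so only finitely many classes $[D]$ arise.

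Finally I would descend from divisor classes to actual points. Finitely many classes $[D]$ give finitely many divisors $D$ except possibly when some class $c$ satisfies $h^0(c)\ge 2$, i.e.\ $|c|$ is a positive-dimensional linear system; so the remaining task is to show that only finitely many $f$-rigid divisors $D$ lie in such a $|c|$. An $f$-rigid $D=\sum P_i\in|c|$ has $f$ injective on its support, and if infinitely many such $D$ occurred they would sweep out $|c|$, producing a map $X\to\Pro^{\dim|c|}$ which, combined with $f$ via Castelnuovo–Severi, again contradicts $g-1>(\deg f)(\nu-1)$ — this is the mechanism that, for instance, forces the divisors of the hyperelliptic pencil on a hyperelliptic $X$ to be non-$f$-rigid. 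It then follows that the set of $f$-rigid divisors of degree $n$ is finite for each $n\le\nu$, and since every $f$-rigid point lies in the support of its divisor, the set in the statement is finite. I expect the main obstacle to be exactly these two geometric inputs — excluding translated abelian subvarieties from $W_n$ and controlling $f$-rigid divisors inside moving linear systems — since that is where the inequality $g-1>(\deg f)(\nu-1)$ is genuinely used; the reductions and the Galois–divisor bookkeeping around them are routine.
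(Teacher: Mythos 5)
Your reduction to divisor classes is sound as far as it goes (rigidity is indeed equivalent to $f$ being injective on the conjugates, $D\le f^*E$, and $[D]+[D']=n[F]$ with $[F]=[f^*\infty]$ fixed), but the load-bearing step fails. The ``standard structure theory for abelian subvarieties of $W_m$'' that you invoke --- namely that a translate $B+x\subseteq W_n$ with $\dim B\ge 1$ forces a covering $X\to C$ of controlled degree onto a curve of positive genus --- is not a theorem: it is the Abramovich--Harris conjecture, which is known only for $n\le 3$ and was disproved by Debarre and Fahlaoui, who construct curves (lying in symmetric powers of an elliptic curve) whose $W_n$ contains an elliptic curve for $n\ge 4$ without any low-degree covering of a positive-genus curve. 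Since $\nu$ is arbitrary here, the geometric input you need is simply unavailable, and your explicit reduction from $V_n$ to $W_n$ throws away the one extra constraint ($[D]\in n[F]-W_{(\deg f-1)n}$) that could conceivably compensate. A secondary weak point is the last paragraph: when a class $c$ has $h^0(c)\ge 2$, the auxiliary pencil you extract may factor through $f$ itself (this is exactly what happens for the hyperelliptic pencil), in which case Castelnuovo--Severi says nothing and you must instead argue directly from rigidity that divisors moving in that pencil are not $f$-rigid; as written this case is only gestured at.

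For comparison, the paper does not reprove this statement at all --- it is Vojta's theorem --- and the mechanism it records is entirely different from yours: Vojta's height inequality $(2g-2)h(x)\le(1+\varepsilon)d_a(x)+O(1)$ for algebraic points, combined with the arithmetic-discriminant bound $d_a(x)\le d_a(f(x))=2(\nu-1)(\deg f)\,h(x)+O(1)$ valid precisely because $k(x)=k(f(x))$. Under $g-1>(\deg f)(\nu-1)$ these two inequalities force $h(x)=O(1)$, and Northcott's theorem gives finiteness. There the numerical hypothesis enters as a literal comparison of the coefficients of $h(x)$, not through the geometry of $W_n$; this is presumably why the Diophantine-approximation route succeeds where the Mordell--Lang/Brill--Noether route you propose runs into the Debarre--Fahlaoui obstruction.
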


Vojta proved this theorem under the hypothesis \ref{vojtainequality}, which involves the genus of the curve, the height of the point $x$, and the arithmetic discriminant $d_a(x).$
    \begin{equation}\label{vojtainequality}
    (2g-2)h(x)\leq(1+\varepsilon)d_a(x)+O(1).
\end{equation}



To study algebraic points of bounded degree, we use the following strategy. Let $X^{(\nu)}$ be the $\nu$-th symmetric power of $X$ that is defined as $X^{(\nu)}=X^{\nu}/S_{\nu}$, where the symmetric group $S_{\nu}$ acts on $X^{\nu}$ by permuting the coordinates. There is a map $X\left(\kbar^{\ =\nu} \right)\to X^{(\nu)}(k)$ which sends an algebraic point $x\in X(\overline{k})$ of degree $\nu$ to its complete Galois orbit $\left(\sigma_1(x),...,\sigma_\nu(x)\right)\in X^{(\nu)}(k)$

Let $J_X$ be the jacobian variety of $X$, then there is a morphism $\Phi_\nu \colon X^{(\nu)}\to J_X$ given by
$$\{x_1,...,x_\nu\}\mapsto \left[(2g-2)\sum x_i-\nu K_X\right],$$
where $K_X$ is the canonical divisor of $X$. We denote by $W_\nu$ the image of $X^{(\nu)}$ in $J_X$ via this morphism.
Composing the previous two maps, we get a function $X\left(\kbar^{\ =\nu}\right) \to J_X(k)$, which allows us to study algebraic points in $X$ via rational points of $J_X.$ The image of a point $p\in X\left(\kbar^{\ =\nu}\right)$ will be denoted by $D_p$.

Now, we establish a deeper relation between $p$ and $D_p$. In fact, we will compare $h(p)$ to the N\'eron-Tate height $\hhat_{J_X}(D_p)$. For this, we follow \cite{Songtucker01}. Note that the definition of $D_p$ in \cite{Songtucker01} is not the same as we defined, but differs only by a multiple. The reason for this discrepancy is the choice of morphism from $X^{(\nu)}$ to $J_X.$

Following equation (1.0.4) in \cite{Songtucker} and considering the appropriate renormalization we get

\begin{equation}\label{songtucker}
    \dfrac{D_p^2}{(2g-2)^2\nu}=d_a(p)-h_{K_X}(p)-2\nu h(p)+O(\nu),
\end{equation}
and at the same way by equation (1.0.5) in \cite{Songtucker01}
\begin{equation}\label{arithmetichodgeindex}
D_p^2=-2\hhat_{J_X}(D_p).
\end{equation}

By equalities \ref{songtucker} and \ref{arithmetichodgeindex}, we get

\begin{equation}\label{heightandnerontate}
    2(\nu+g-1)h(p)=d_a(p)+2\dfrac{ \hhat_{J_X}(D_p)}{(2g-2)^2\nu}+O(\nu).
\end{equation}

On the other hand, using the properties of the arithmetic discriminants due to Vojta \cite{vojta1989arithmetic}, we bound $d_a(p)$ as follows 
\begin{equation}\label{arithmeticdiscriminant}
    d_a(p)\leq d_a(f(p))=2(\nu-1)h_{\Pro^1}(f(p))+O(1)=2(\nu-1)\delta h(p)+O(1).    
\end{equation}

The inequality holds by Lemma1 3.4 (e) in \cite{vojta1989arithmetic}. The first equality holds by Lemma 3.4 (d) in \cite{vojta1989arithmetic}. The last equality follows by the functoriality of the Weil height machine.

In this way, we have the following lemma, which gives the first relation between the height $h(p)$ and $\hhat_{J_X}(D_p)$.
\begin{lemma}\label{ineq1} Let $g,\delta$ and $\nu$ as below, then
    $$\left(g-(\delta-1)(\nu-1)\right)h(p)\leq \dfrac{ \hhat_{J_X}(D_p)}{(2g-2)^2\nu}+O(\nu).$$
\end{lemma}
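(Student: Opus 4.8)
The statement is a purely algebraic consequence of the two relations assembled above, so no new geometric input is needed. The plan is to start from the identity \ref{heightandnerontate},
\[
2(\nu+g-1)h(p)=d_a(p)+2\dfrac{\hhat_{J_X}(D_p)}{(2g-2)^2\nu}+O(\nu),
\]
and substitute into it the upper bound for the arithmetic discriminant coming from \ref{arithmeticdiscriminant}, namely $d_a(p)\leq 2(\nu-1)\delta\,h(p)+O(1)$. Since $d_a(p)$ enters \ref{heightandnerontate} with a positive coefficient, replacing it by a larger quantity enlarges the right-hand side, so the inequality
\[
2(\nu+g-1)h(p)\leq 2(\nu-1)\delta\,h(p)+2\dfrac{\hhat_{J_X}(D_p)}{(2g-2)^2\nu}+O(\nu)
\]
holds, with the implied constant independent of $p$.

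Next I would move the $h(p)$-terms to the left and simplify the resulting coefficient. We have $(\nu+g-1)-(\nu-1)\delta=g+(\nu-1)-(\nu-1)\delta=g-(\delta-1)(\nu-1)$. Dividing through by $2$, and absorbing the residual $O(1)$ from \ref{arithmeticdiscriminant} into the $O(\nu)$ error, yields precisely
\[
\bigl(g-(\delta-1)(\nu-1)\bigr)h(p)\leq \dfrac{\hhat_{J_X}(D_p)}{(2g-2)^2\nu}+O(\nu),
\]
which is the assertion of the lemma (with $g$ the genus of $X$, $\delta=\deg f$, and $\nu=[k(p):k]$).

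\textbf{On the main obstacle.} Internally to this lemma there is essentially nothing to overcome: the only points demanding a little care are that the direction of the inequality is preserved when $d_a(p)$ is replaced by its upper bound (it is, because the sign is favorable), and that the accumulated error terms are all of size $O(\nu)$ and uniform in $p$. The genuine difficulty lies entirely in the ingredients already imported — the arithmetic Hodge index identity \ref{arithmetichodgeindex}, the Song--Tucker height comparison \ref{songtucker} relating $D_p^2$ to $d_a(p)$, $h_{K_X}(p)$ and $h(p)$, and Vojta's functoriality estimates for the arithmetic discriminant underlying \ref{arithmeticdiscriminant}. Granting those, Lemma \ref{ineq1} follows immediately from the computation above.
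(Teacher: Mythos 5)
Your proposal is correct and follows exactly the paper's own argument: substitute the bound $d_a(p)\leq 2(\nu-1)\delta h(p)+O(1)$ from \eqref{arithmeticdiscriminant} into the identity \eqref{heightandnerontate}, collect the $h(p)$ terms, and simplify the coefficient to $g-(\delta-1)(\nu-1)$. Nothing differs in substance from the paper's proof, and your added remarks about the direction of the inequality and the absorption of the $O(1)$ error are accurate.
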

\begin{proof}
Replacing the inequality \ref{arithmeticdiscriminant} $d_a(p)\leq 2(\nu-1)\delta h(p)+O(1)$ in \ref{heightandnerontate} we get
$$(2\nu+2g-2)h(p)-2(\nu-1)\delta h(p)\leq 2\dfrac{\hhat(D_p)}{(2g-2)^2\nu}+O(\nu),$$
and we conclude directly 
$$\left(g-(\delta-1)(\nu-1)\right)h(p)\leq \dfrac{ \hhat_{J_X}(D_p)}{(2g-2)^2\nu}+O(\nu).$$
\end{proof}

\begin{lemma}\label{ineq2} Let $g,\delta$ and $\nu$ as below, then
    $$(\nu+g-1)h(p)\geq \dfrac{ \hhat_{J_X}(D_p)}{(2g-2)^2\nu}+O(\nu).$$
\end{lemma}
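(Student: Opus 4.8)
The plan is to obtain Lemma \ref{ineq2} directly from the exact relation \ref{heightandnerontate}, together with a lower bound for the arithmetic discriminant. Solving \ref{heightandnerontate} for the quantity of interest gives
\[
(\nu+g-1)h(p)-\dfrac{\hhat_{J_X}(D_p)}{(2g-2)^2\nu}=\tfrac{1}{2}d_a(p)+O(\nu),
\]
so the inequality to be proved is \emph{equivalent} to the statement that $d_a(p)$ is bounded below by a constant depending only on $X$ and $\nu$. Since $\nu$ is fixed, such a constant is absorbed in the $O(\nu)$ used throughout this section, and the lemma follows at once.

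For the lower bound on $d_a(p)$ I would invoke a basic property of Vojta's arithmetic discriminant from \cite{vojta1989arithmetic}: $d_a(p)$ dominates the normalized logarithmic discriminant of the extension $k(p)/k$ up to $O(1)$, and the latter is $\geq 0$ because the relative different is an integral ideal; hence $d_a(p)\geq -O(1)$ uniformly over all $f$-rigid points $p$ of degree $\nu$. (Alternatively one can use $h_{K_X}(p)\leq d_a(p)+O(1)$ together with $h_{K_X}(p)\geq -O(1)$, which holds since $K_X$ is basepoint free for $g\geq 2$.) Substituting $d_a(p)\geq -O(1)$ into the displayed identity yields $(\nu+g-1)h(p)\geq \frac{\hhat_{J_X}(D_p)}{(2g-2)^2\nu}+O(\nu)$, which is the assertion.

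The whole argument is therefore one line of rearrangement of \ref{heightandnerontate} plus a citation; the only point that genuinely deserves care — and the only place an obstacle could hide — is that the lower bound on $d_a(p)$ must be uniform over the entire family of points of degree $\nu$, with no dependence on the varying field $k(p)$. This is automatic, since both ingredients (Vojta's comparison of $d_a$ with the field discriminant, and the non-negativity of the latter) are themselves uniform. Combined with Lemma \ref{ineq1}, the two inequalities sandwich $h(p)$ between constant multiples of $\hhat_{J_X}(D_p)$ up to $O(\nu)$, which is exactly the comparison needed in Section \ref{proofofmainthm}.
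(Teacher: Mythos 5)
Your proof is correct and follows essentially the same route as the paper, which simply declares the lemma ``a consequence of equation \eqref{heightandnerontate}''; the rearrangement you perform is the only step involved, and the implicit ingredient is exactly the one you identify, namely that $d_a(p)$ is bounded below (indeed nonnegative, since by Vojta it dominates the normalized logarithmic discriminant of $k(p)/k$). Your write-up merely makes explicit the lower bound on $d_a(p)$ that the paper leaves unstated, which is a welcome clarification rather than a departure.
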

\begin{proof}
This is a consequence of equation \eqref{heightandnerontate}.
\end{proof}

\section{Proof of theorem \ref{mainthm}}\label{proofofmainthm}

\begin{proof}

First, for expository purpose we denote by $N_f(\nu,k,T)$ to
$$N_f(\nu,k,T)=\#\left\{  p \in X\left( \kbar^{ \ =\nu}\right):h(p)\leq T  \ \& \   p \ f\text{-rigid} \right\}.$$
As we saw in the previous section, the $f$-rigid points satisfy
$$ \dfrac{\nu}{\nu+g-1}\hhat_{J_X}(D_p)\leq h(p)\leq \dfrac{\nu}{g-(\delta-1)(\nu-1)}\hhat_{J_X}(D_p). $$
In other words $h(p)\asymp\hhat_{J_X}(D_p).$ The symbol $\asymp$ only depends on the fixed parameters $\delta, \nu$ and $g$. To study the growth rate of $N_f(\nu,k,T)$, we count  
the points $x\in W_\nu(k)$ such that $x$ is the image via $\Phi_\nu$ of $f$-rigid points.
As $W_\nu$ is a subvariety of an abelian variety, we can study its $k$-rational points via the Faltings theorem, which establishes that there are abelian subvarieties $A_1,...,A_m$ of $J_X$ and rational points $x_1,...,x_m \in J_X(k)$, such that 
$$W_\nu(k)=\bigcup_{i=1}^{m} \left( A_i(k)+x_i\right).$$
Each abelian subvariety $A_i$ has rank $\rho_i$. Let $\rho$ be the maximum between all the $\rho_i.$ By N\'eron theorem, there are constants $\alpha_1,\cdots \alpha_m$ such that

$$\#\left\{x\in A_i(k): \hhat(x)\leq T \right\}=\alpha_i T^{\rho_i/2}+O(T^{(\rho_i-1)/2}).$$

By N\'eron theorem and Faltings theorem, there is a constant $\alpha$ such that

\begin{equation}\label{countW}
\#\left\{x\in W_\nu(k):\hhat(x)\leq T \right\}=\alpha(A/k) T^{\rho/2}+O\left(T^{(\rho-1)/2}\right).
\end{equation}

As $h(x)\asymp \hhat(D_p)$, in order to count $f$-rigid points in $X(\overline{k}^{=\nu})$, it is enough to understand their images in $W_\nu.$

We split this problem in 3 cases. 

\noindent \textbf{Case 1:} $1\leq \nu<\gamma/2.$ 
By \cite[Prop 2]{frey}, there are at most finitely many points of degree $\nu.$ Then  $\rho=0$.

\noindent \textbf{Case 2:} $\gamma/2<\nu<\gamma.$

Let $x$ be an element in $X\left( \overline{k}^{=\nu}\right)$ such that $f(x)=y\in \Pro^1(k)$. As $f$ is defined over $k$, for any $\sigma\in \Gal(\overline{k}/k)$, $f(x^{\sigma})=y.$ Then $f^{-1}(y)=\left\{x^{\sigma_1},...,x^{\sigma_\nu},z_1,...,z_{\gamma-\nu} \right\}$, with $z_{j}\in X\left( \overline{k}^{\leq \gamma-\nu}\right)$. As $\gamma-\nu <\gamma/2$, by case 1, there are only finitely many points $z_j \in X(\overline{k}^{\gamma-\nu})$, then there are finitely many points $x\in X(\overline{k}^{=\nu})$ such that $f(x)\in \Pro^1(k)$.
We conclude that the image of $X(\overline{k}^{=\nu})$ is all $W_\nu(k)$ except for finitely many points. By equation \ref{countW}, 
$$\#\left\{x\in X(\overline{k}^{=\nu}):h(x)\leq T, \text{ and } f\text{- rigid} \right\}\asymp K_{\gamma,\nu,g}T^{\rho/2}.$$

\textbf{Case 3:} $\nu=\gamma.$
In this case, the points $x\in X\left(\overline{k}^{=\nu}\right)$ such that $k(f(x))=k$ colapse to one point via the map $X^{(\nu)}\to W_\nu\subset J_X$. Namely we can construct a morphism $\pi\colon \Pro^1\to X^{(\nu)}$ given by $\pi(t)=f^\ast(t)$. In this way the point no $f$-rigid belong to the same fiber in the morphism $x^{(\nu)}(k)\to J_X(k)$
Again by equation \ref{countW}
we conclude the existence of a constant $K_{\gamma,\nu,g}$ such that

$$\#\left\{P \in X\left(\kleqnu\right):h(P)\leq T  \right\}\asymp K_{\rho, \delta,\nu}T^{\rho/2}.$$
\end{proof}

\begin{remark} Our theorem state that the growth rate of algebraic $f$-rigid points of bounded degree growth as a polynomial. This fact is the content of the following corollary.
\end{remark}

\begin{corollary}\label{cor}
    Under the hypothsis on $\delta,\nu,g$ as before, if $\{p\in X(\overline{k}^{\leq \nu}):k(p)=k(f(p))\}$ is infinite, then
    $$N_f(\nu,k,T)\gg T^{1/2}.$$
\end{corollary}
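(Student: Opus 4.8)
The plan is to combine Theorem~\ref{mainthm} with a rank comparison among the image subvarieties $W_d\subseteq J_X$. By Theorem~\ref{mainthm} we already know $N_f(\nu,k,T)\asymp T^{\rho/2}$ for a nonnegative integer $\rho$, and, in the notation of its proof, $\rho=\max_i\rho_i$ where $W_\nu(k)=\bigcup_i\bigl(A_i(k)+x_i\bigr)$ with $x_i\in J_X(k)$. Since $T^{\rho/2}\ge T^{1/2}$ for $T\ge 1$ as soon as $\rho\ge 1$, the corollary reduces to showing $\rho\ge 1$; by Theorem~\ref{mainthm} this is equivalent to $N_f(\nu,k,T)$ being unbounded, and also equivalent to $W_\nu(k)$ being infinite.

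To get this, I would first localise the infinitude. The set $\{p\in X(\kleqnu):k(p)=k(f(p))\}$ is the finite union of its strata of $f$-rigid closed points of degree exactly $d$, $1\le d\le\nu$; by \cite[Prop.\ 2]{frey} (as in Case~1 of the proof of Theorem~\ref{mainthm}) each stratum with $d<\gamma/2$ is finite, so the hypothesis forces some $d$ with $\gamma/2<d\le\nu$ carrying infinitely many $f$-rigid closed points of degree exactly $d$. If $d=\nu$ we are done immediately: by the Northcott property there are then infinitely many such points of height exceeding any prescribed bound, so $N_f(\nu,k,T)\to\infty$ and $\rho\ge 1$.

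It remains to handle $\gamma/2<d<\nu\le\gamma$, which I expect to be the crux. Since $d$ lies below the gonality, every effective divisor of degree $d$ is rigid in its linear system, so the map $p\mapsto D_p\in W_d(k)$ on $f$-rigid closed points of degree $d$ is finite-to-one (two degree-$d$ divisors with the same $\Phi_d$-image differ by a class in the finite group $J_X[2g-2]$); hence $W_d(k)$ is infinite, and by Faltings's theorem $W_d(k)=\bigcup_j\bigl(B_j(k)+y_j\bigr)$ with $y_j\in J_X(k)$, so some $B:=B_{j_0}$ has positive rank and $B+y_{j_0}\subseteq W_d$. If $X$ admits a $k$-rational effective divisor $E$ of degree $\nu-d$ — automatic when $X(k)\neq\emptyset$, e.g.\ when $f$ is a projection from a rational point — then $\Phi_\nu(D+E)=\Phi_d(D)+c$ with $c=[(2g-2)E-(\nu-d)K_X]\in J_X(k)$, whence $W_d+c\subseteq W_\nu$; consequently $B+(y_{j_0}+c)\subseteq W_\nu$ is a $k$-rational translate of $B$ containing the $k$-point $y_{j_0}+c$, so $W_\nu(k)\supseteq\bigl(B+y_{j_0}+c\bigr)(k)\cong B(k)$ is infinite and $\rho\ge 1$. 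In every case $N_f(\nu,k,T)\asymp T^{\rho/2}\gg T^{1/2}$. The step I expect to be the real obstacle is securing the divisor $E$: the degree-$\gamma$ fibres of $f$ and the degree-$d$ points in hand only generate divisor degrees in the numerical semigroup $\langle d,\gamma\rangle$, which — as $\nu$ is prime and $1<d<\nu\le\gamma$ — never contains $\nu-d$; so in the absence of further arithmetic input on $X$ one must instead show directly that the strata with $\gamma/2<d<\nu$ are finite, a finiteness that the tools assembled so far do not obviously provide.
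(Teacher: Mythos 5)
Your opening reduction is exactly the paper's (entirely implicit) proof: the paper offers no argument for the corollary beyond the preceding remark that it ``is the content'' of Theorem \ref{mainthm}, and the intended two lines are precisely what you wrote --- $N_f(\nu,k,T)\asymp T^{\rho/2}$, infinitude of the counted set plus Northcott forces $N_f(\nu,k,T)$ to be unbounded, hence $\rho\ge 1$ and $T^{\rho/2}\gg T^{1/2}$. Your treatment of the stratum $d=\nu$ is therefore all the paper actually requires, and it is correct.

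Everything after that is you wrestling with a defect in the statement rather than with mathematics the paper supplies: the hypothesis is phrased for $X(\kleqnu)$ while $N_f(\nu,k,T)$ counts points of degree exactly $\nu$, and the paper never bridges the two. Your stratification by exact degree is the right probe, your use of Frey for $d<\gamma/2$ matches Case 1 of the paper's proof of the theorem, and your final self-diagnosis is accurate: for $\gamma/2<d<\nu$ an infinite $f$-rigid stratum of degree $d$ yields (via the finite-to-one map to $W_d$ below the gonality) a positive-rank translate inside $W_d$, but nothing in the paper transports this to $W_\nu$ --- your numerical-semigroup observation correctly shows the auxiliary effective divisor of degree $\nu-d$ cannot be manufactured from $f$ and the degree-$d$ points alone --- and no finiteness result for these intermediate strata is proved or cited. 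So the corollary as literally printed does not follow from the theorem; it does follow if the hypothesis is read with $\knu$ in place of $\kleqnu$, which is evidently what the author intends given the surrounding discussion of $\nu$ prime. In short: your first paragraph together with the $d=\nu$ case constitutes the paper's proof, and the unclosed case you flag is a gap in the statement, not in your reasoning.
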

In the theorem we take $\nu$ prime. If $x\in X(\overline{k})^{=\nu}$, then as the field $k(f(x))$ is an intermediate extension of $k(x)/k$, we have only two possibil<ities. In this way to say that a point is not $f$-rigid is equivalent to say that $k(f(x))=k.$

\section{examples}\label{examples}
In this section, we show two examples. The first example illustrates a situation in which there are no $f$-rigid points.
In the second example, we see a curve with a morphism to $\Pro^1$ with infinitely many $f$-rigid points.

\begin{example}
Let $C_{1}$ be the curve given by the plane model $y^2=x^5+1$ defined over $\Q$. $C_1$ is a hyperelliptic curve of genus $2$. Let consider $f \colon C_1 \to \Pro^1$ given by $(x,y)\mapsto x$ of degree $2$. Following \cite{mustapha}, $C_1\left(\overline{\Q}^{\leq 2} \right)=\left\{(x,\pm\sqrt{x^5+1}):x\in\Q^\ast \right\}$. Then
$$\left\{x\in C_1\left(\overline{\Q}^{= 2}\right):x \text{ is }f \text{-rigid}\right\}=\emptyset.$$
\end{example}

\begin{example}\label{example3}    
Let $X$ be the curve defined by $z^3=x^4-x^2-1$. $X$ is smooth of genus $3$ and gonality $3.$ Let $f\colon X\to \Pro^1$ be the degree $3$ morphism given by $(x,z)\mapsto x$. Let $E$ be the elliptic curve $E:y^2-y=x^3+1$ (whose label in LMFDB is $225a1$). The Mordell-Weil group $E(\Q)$ is isomorphic to $\Z$. 
If $(x,y)=(a,b)\in E(\Q)$, then $(x,z)=(\sqrt{b},a)\in X(\overline{\Q}^{\leq 2})$. By Faltings theorem most of this points are quadratic. If $(\sqrt{b},a)\in X(\overline{\Q})$ is quadratic, then $k(f(\sqrt{b},a))=k(\sqrt{b})$ which is quadratic over $\Q.$ Then we conclude that there are infinitely many $f$-rigid points.
\end{example}

\section*{Acknowledgements}

I would like to thank Hector Pasten for many discussions, suggestions, and helpful remarks. I was supported by ANID Doctorado Nacional 21200910.

\bibliographystyle{amsalpha}
\bibliography{refs.bib}

\providecommand{\bysame}{\leavevmode\hbox to3em{\hrulefill}\thinspace}
\providecommand{\MR}{\relax\ifhmode\unskip\space\fi MR }
\providecommand{\MRhref}[2]{%
  \href{http://www.ams.org/mathscinet-getitem?mr=#1}{#2}
}
\providecommand{\href}[2]{#2}
\begin{thebibliography}{CFS23}

\bibitem[CFS23]{mustapha}
Moustapha Camara, Moussa Fall, and Oumar Sall, \emph{Algebraic points on the
  hyperelliptic curves {$y^2=x^5+n^2$}}, Ann. Univ. Paedagog. Crac. Stud. Math.
  \textbf{22} (2023), 21--31. \MR{4586570}

\bibitem[Fre94]{frey}
Gerhard Frey, \emph{Curves with infinitely many points of fixed degree}, Israel
  J. Math. \textbf{85} (1994), no.~1-3, 79--83. \MR{1264340}

\bibitem[ST99]{Songtucker}
Xiangjun Song and Thomas Tucker, \emph{Dirichlet's theorem, {V}ojta's
  inequality, and {V}ojta's conjecture}, Compositio Math. \textbf{116} (1999),
  no.~2, 219--238. \MR{1686848}

\bibitem[ST01]{Songtucker01}
\bysame, \emph{Arithmetic discriminants and morphisms of curves}, Transactions
  of the American Mathematical Society \textbf{353} (2001), no.~5, 1921--1936.

\bibitem[Voj91]{vojta1989arithmetic}
Paul Vojta, \emph{Arithmetic discriminants and quadratic points on curves},
  Arithmetic algebraic geometry ({T}exel, 1989), Progr. Math., vol.~89,
  Birkh\"auser Boston, Boston, MA, 1991, pp.~359--376. \MR{1085268}

\end{thebibliography}

\end{document}